\definecolor{verylight}{gray}{0.97}
\definecolor{light}{gray}{0.93}
\definecolor{medium}{gray}{0.82}
 \def\frk{\frak}               
 \def\mm{{\frk m}}
 \def\nn{{\frk n}}
 \def\opn#1#2{\def#1{\operatorname{#2}}} 
 \opn\chara{char} \opn\length{\ell} \opn\pd{pd} \opn\rk{rk}
 \opn\projdim{proj\,dim} \opn\injdim{inj\,dim} \opn\rank{rank}
 \opn\depth{depth} \opn\grade{grade} \opn\height{height}
 \opn\embdim{emb\,dim} \opn\codim{codim}
 \opn\Tr{Tr} \opn\bigrank{big\,rank}
 \opn\superheight{superheight}\opn\lcm{lcm}
 \opn\trdeg{tr\,deg}
 \opn\reg{reg} \opn\lreg{lreg} \opn\ini{in} \opn\lpd{lpd}
 \opn\size{size} \opn\sdepth{sdepth}
 \opn\link{link}\opn\fdepth{fdepth}\opn\lex{lex}
 \opn\div{div} \opn\Div{Div} \opn\cl{cl} \opn\Cl{Cl}
 \opn\Spec{Spec} \opn\Supp{Supp} \opn\supp{supp} \opn\Sing{Sing}
 \opn\Ass{Ass} \opn\Min{Min}\opn\Mon{Mon}
 \opn\Ann{Ann} \opn\Rad{Rad} \opn\Soc{Soc}
 \opn\Im{Im} \opn\Ker{Ker} \opn\Coker{Coker} \opn\Am{Am}
 \opn\Hom{Hom} \opn\Tor{Tor} \opn\Ext{Ext} \opn\End{End}
 \opn\Aut{Aut} \opn\id{id}
 \opn\nat{nat}
 \opn\pff{pf}
 \opn\Pf{Pf} \opn\GL{GL} \opn\SL{SL} \opn\mod{mod} \opn\ord{ord}
 \opn\Gin{Gin} \opn\Hilb{Hilb}\opn\sort{sort}
 \opn\aff{aff} \opn
\opn\relint{relint} \opn\st{st}
 \opn\lk{lk} \opn\cn{cn} \opn\core{core} \opn\vol{vol}
 \opn\link{link} \opn\star{star}\opn\lex{lex}\opn\set{set}
 \opn\gr{gr}
 \def\pot#1#2{#1[\kern-0.28ex[#2]\kern-0.28ex]}
 \opn\dirlim{\underrightarrow{\lim}}
 \opn\inivlim{\underleftarrow{\lim}}
 \let\union=\cup
 \let\Union=\bigcup
 \def\Implies{\ifmmode\Longrightarrow \else
         \unskip${}\Longrightarrow{}$\ignorespaces\fi}
 \def\implies{\ifmmode\Rightarrow \else
         \unskip${}\Rightarrow{}$\ignorespaces\fi}
 \def\iff{\ifmmode\Longleftrightarrow \else
         \unskip${}\Longleftrightarrow{}$\ignorespaces\fi}
 \newtheorem{Theorem}{Theorem}[section]
 \let\epsilon\varepsilon
 \let\kappa=\varkappa
 \def\qed{\ifhmode\textqed\fi
       \ifmmode\ifinner\quad\qedsymbol\else\dispqed\fi\fi}
 \def\textqed{\unskip\nobreak\penalty50
        \hskip2em\hbox{}\nobreak\hfil\qedsymbol
        \parfillskip=0pt \finalhyphendemerits=0}
 \def\dispqed{\rlap{\qquad\qedsymbol}}
 \opn\dis{dis}
 \def\pnt{{\raise0.5mm\hbox{\large\bf.}}}
 \opn\Lex{Lex}
\begin{document}

 \title {Monomial ideals whose depth function has any given number of strict local maxima}

 \author {Somayeh Bandari, J\"urgen Herzog and Takayuki Hibi}

\address{S. Bandari, Department of Mathematics, Az-Zahra
University, Vanak, Post Code 19834, Tehran, Iran.}
\email{somayeh.bandari@yahoo.com}

\address{J\"urgen Herzog, Fachbereich Mathematik, Universit\"at Duisburg-Essen, Campus Essen, 45117
Essen, Germany} \email{juergen.herzog@uni-essen.de}

\address{Takayuki Hibi, Department of Pure and Applied Mathematics, Graduate School of Information Science and Technology,
Osaka University, Toyonaka, Osaka 560-0043, Japan}
\email{hibi@math.sci.osaka-u.ac.jp}
\subjclass{13A15, 13C13}
\keywords{Monomial ideals, powers of ideals, depth function}

 \begin{abstract}
We construct  monomial ideals with the property that their depth function has any given number of strict local maxima.
 \end{abstract}

 \maketitle

\section*{}
In recent years there have been several publications concerning the stable set of prime ideals of a monomial ideal, see for example  \cite{CMS},\cite{FHV}, \cite{MMV} and \cite{HRV}.  It is known by Brodmann \cite{Br2} that for any graded ideal $I$ in the polynomial ring $S$ (or any proper ideal $I$ in a local ring)  there exists an integer $k_0$ such that  $\Ass(I^k)=\Ass(I^{k+1})$ for $k\geq k_0$. The smallest integer $k_0$ with this property is called the index of stability of $I$ and $\Ass(I^{k_0})$ is  called the set of stable prime ideals of $I$. A  prime ideal $P\in \Union_{k\geq 1}\Ass(I^k)$ is said to be persistent  with respect to $I$ if whenever $P\in \Ass(I^k)$ then $P\in \Ass(I^{k+1})$, and the ideal $I$ is said to satisfy the persistence property  if all prime ideals $P\in \Union_{k\geq 1}\Ass(I^k)$ are persistent. It is an open question (see \cite{FHT} and \cite[Question 3.28]{VT}) whether  any squarefree monomial ideal satisfies the persistence property.

We call the numerical function $f(k)=\depth(S/I^k)$ the depth function of $I$. It is easy to see that a monomial ideal  $I$ satisfies the persistence property  if all monomial localizations of $I$ have a non-increasing depth function. In view of the above mentioned open question it is natural to ask whether all squarefree monomial ideals have non-increasing depth functions.  The situation for non-squarefree monomial ideals  is completely different. Indeed, in \cite[Theorem 4.1]{HH1} it is shown that for any non-decreasing numerical function $f$, which is eventually constant,  there exists a monomial ideal $I$ such that $f(k)=\depth(S/I^k)$
for all $k$.  Note that a similar result for non-increasing depth functions is not known, even it is expected that all squarefree monomial ideals have  non-increasing depth functions. In general  the depth function of a monomial ideal does not  need to be  monotone. Examples of monomial ideals with non-monotone depth function are given in \cite[Example 4.18]{MV} and \cite{HH1}. The question arises which numerical functions are   depth functions of  monomial ideals. Since $\depth(S/I^k)$  is constant for all $k\gg 0$ (see \cite{Br1}), any depth functions is eventually  constant. So the most wild conjecture one could make is that any numerical function  which is eventually constant is indeed the  depth function of a monomial ideal. In support of this conjecture we show in our theorem that for any given number $n$ there exists a monomial ideal whose depth function  has precisely $n$ strict local maxima. The price that we  have to pay to obtain such examples is that the number of variables needed to define  our monomial ideal with $n$ strict local maxima is relatively large, namely $2n+4$. For this class of examples  the depth function is constant  beyond the number of variables. In all other examples  known to us, in particular those discussed in \cite{HH1}, this is also the case.  Thus we are tempted to conjecture that for any monomial ideal $I$ in a polynomial ring in $n$ variables $\depth(I^k)$   is constant  for  $k\geq n$.

In the following theorem we present  the  monomial  ideals admitting a depth function as announced in the tile of the paper.

\begin{Theorem}
\label{strange}
Let $n\geq 0$ be an integer and $I\subset S=K[a,b,c,d, x_1,y_1,\ldots,x_n,y_n]$ be the monomial ideal in the polynomial ring $S$ with generators
\[
a^6,a^5b,ab^5,b^6,a^4b^4c,a^4b^4d,a^4x_1y_1^2,b^4x_1^2y_1,\ldots,a^4x_ny_n^2,b^4x_n^2y_n.
\]
Then
\[
\depth(S/I^k)=\left\{
              \begin{array}{ll}
                0, & \hbox{if $k$ is odd and $k\leq 2n+1$;} \\
                1, & \hbox{if $k$ is even and $k\leq 2n$;} \\
                2, & \hbox{if $k> 2n+1$.}
              \end{array}
            \right.
\]
In particular, the depth function of this  ideal has precisely $n$ strict local maxima.
\end{Theorem}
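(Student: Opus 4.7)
Let $\mm = (a,b,c,d,x_1,y_1,\ldots,x_n,y_n)$. A direct check shows $\sqrt{I} = (a,b)$, so $(a,b)$ is the unique minimal prime of every $I^k$ and each associated prime of $S/I^k$ is a monomial prime containing $(a,b)$. Throughout, I would use the bound $\depth(S/I^k) \leq \dim(S/\pp)$ for $\pp \in \Ass(S/I^k)$, together with lower bounds coming either from explicit regular sequences on $S/I^k$ or from ruling out embedded primes of large height.

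For odd $k \leq 2n+1$, my plan is to exhibit an explicit monomial $u_k \notin I^k$ with $(I^k : u_k) = \mm$, giving $\mm \in \Ass(S/I^k)$ and $\depth(S/I^k) = 0$. The shape of $u_k$ is essentially forced: since the only generators involving $c$ or $d$ are $a^4b^4c$ and $a^4b^4d$, the requirements $cu_k, du_k \in I^k$ impose $a^4b^4 \mid u_k$, and similarly $x_iu_k, y_iu_k \in I^k$ impose $x_iy_i \mid u_k$. One then takes $u_k = a^\alpha b^\beta \prod_i x_i^{p_i} y_i^{q_i}$ with exponents tuned so that every $zu_k$ admits an explicit factorization as a product of $k$ generators of $I$, while $u_k$ itself does not. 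The threshold $2n+1$ enters because each increase of $k$ by $2$ must be absorbed by raising the exponents on one additional pair $x_iy_i$, and only $n$ such pairs are available.

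For even $k \leq 2n$, the lower bound $\depth(S/I^k) \geq 1$ requires $\mm \notin \Ass(S/I^k)$, and this is the combinatorial heart of the argument. Assuming $(I^k : u) = \mm$, the forced factorizations of $au$ and $bu$ into $k$ generators each yield joint constraints on the multiplicities of the various generator types; by exploiting the $a \leftrightarrow b$, $x_i \leftrightarrow y_i$ involution of $I$, which swaps the ``$a$-biased'' family $\{a^6, a^5b, a^4x_iy_i^2\}$ with the ``$b$-biased'' family $\{b^6, ab^5, b^4x_i^2y_i\}$, I would extract a $\ZZ$-valued invariant whose parity must match $k \bmod 2$, contradicting evenness. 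The matching upper bound $\depth(S/I^k) \leq 1$ follows by exhibiting a different monomial $v_k$ whose colon $(I^k : v_k)$ is a prime of height $2n+3$, namely $\mm$ with one variable omitted. Finally, for $k > 2n+1$, the construction of $u_k$ breaks down because no unused pair $x_iy_i$ remains, and one shows instead that a prime of height $2n+2$, the natural candidate being $(a,b,x_1,y_1,\ldots,x_n,y_n)$, lies in $\Ass(S/I^k)$ via an analogous colon computation, forcing $\depth(S/I^k) = 2$.

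The main obstacle will be the parity argument for even $k$: identifying the right combinatorial invariant that links the generator-decompositions of $au$ and $bu$ to $k \bmod 2$, and executing the case analysis on which generator types appear in the forced factorizations, is the most delicate step and accounts for the bulk of the technical work; by contrast, once the correct form of $u_k$ is guessed the verifications in the odd case and large-$k$ case reduce to direct exponent comparisons.
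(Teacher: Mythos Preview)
Your overall architecture matches the paper's: an explicit socle element for odd $k\le 2n+1$, a height-$(2n+3)$ associated prime for the upper bound when $k$ is even, and a height-$(2n+2)$ associated prime for the upper bound when $k>2n+1$. For the odd case you have essentially identified the paper's witness.

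There are, however, two genuine gaps.

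\textbf{The lower bound for $k>2n+1$.} You write that ``the construction of $u_k$ breaks down'' and that exhibiting an associated prime of height $2n+2$ ``forces $\depth(S/I^k)=2$''. But an associated prime of height $2n+2$ only gives $\depth(S/I^k)\le 2$; the failure of one particular socle construction does not give $\depth\ge 1$, let alone $\depth\ge 2$, and for odd $k>2n+1$ your parity argument does not even apply. The paper handles this by proving that $c,d$ is a regular sequence on $S/I^k$ once $k>2n+1$: one shows that $I^k:(c)=I^k$ by observing that the only generators of $I^k$ divisible by $c$ lie in $JL^{k-1}$, so a generator of $(I^k:(c))/I^k$ would have to be of the form $a^4b^4v_1\cdots v_{k-1}$ with the $v_i$ pairwise distinct elements of $G(L)$, which is impossible since $|G(L)|=2n<k-1$. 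This step is what pins the depth at exactly $2$, and nothing in your outline replaces it.

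\textbf{The lower bound for even $k\le 2n$.} Your proposed route---comparing forced factorizations of $au$ and $bu$ and extracting a $\ZZ$-valued invariant via the $a\leftrightarrow b$ symmetry---is different from the paper's and, as stated, too vague to evaluate. The paper does not argue this way at all. Instead it first proves the structural fact that $(I^k:(c,d))/I^k$ is generated by the residues of monomials $a^4b^4v_1\cdots v_{k-1}$ with the $v_i\in G(L)$ \emph{pairwise distinct}; any hypothetical socle element must in particular lie here. Since $k-1$ is odd, the $v_i$ cannot all occur in complementary pairs $(a^4x_\ell y_\ell^2,\,b^4x_\ell^2 y_\ell)$, so some index $\ell$ contributes only one of the two, and the paper checks directly that then multiplying by the corresponding $x_\ell$ or $y_\ell$ does \emph{not} land in $I^k$. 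Thus the parity enters not through a symmetry invariant of the factorizations of $au$ and $bu$, but through the odd cardinality $k-1$ of the distinct-$v_i$ description of $(I^k:(c,d))/I^k$. If your invariant can be made precise it may well work, but the paper's approach via the explicit description of $(I^k:(c,d))/I^k$ is what actually carries the argument and is also what feeds into the $k>2n+1$ step above.
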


\begin{proof}
First of all, for each odd integer $k=2t-1$ with $t\leq n+1$,
we show that $\depth(S/I^k)=0$.  For this purpose we find a monomial belonging to
$(I^k\: \mm) \setminus I^{k}$,
where $\mm = (a,b,c,d, x_1,y_1,\ldots,x_n,y_n)$.
We claim that the monomial
$$u=a^{4}b^{4}(a^4x_1y_1^2)(b^4x_1^2y_1)\cdots(a^4x_{t-1}y_{t-1}^2)(b^4x_{t-1}^2y_{t-1})x_ty_t\cdots x_ny_n$$
satisfies $u \in (I^k\: \mm) \setminus I^{k}$.
Let
\begin{eqnarray*}
v_1&=&a^5b\cdot b^6(a^4x_{t-1}y_{t-1}^{2})\prod_{i=1}^{t-2}(a^4x_iy_i^2)(b^4x_i^2y_i),\\
v_2&=&ab^5\cdot a^6(b^4x_{t-1}^2y_{t-1})\prod_{i=1}^{t-2}(a^4x_iy_i^2)(b^4x_i^2y_i),\\
v_3&=&a^4b^4c\prod_{i=1}^{t-1}(a^4x_iy_i^2)(b^4x_i^2y_i),\\
v_4&=&a^4b^4d\prod_{i=1}^{t-1}(a^4x_iy_i^2)(b^4x_i^2y_i),\\
\end{eqnarray*}
\begin{eqnarray*}
v_{2\ell+3}&=&a^6(b^{4}x_{\ell}^{2}y_{\ell})^{2}\prod_{i=1}^{\ell-1}(a^4x_iy_i^2)(b^4x_i^2y_i)
\prod_{i=\ell+1}^{t-1}(a^4x_iy_i^2)(b^4x_i^2y_i), \, \, \, 1\leq\ell\leq t-1,\\
v_{2\ell+4}&=&b^6(a^{4}x_{\ell}y_{\ell}^{2})^{2}\prod_{i=1}^{\ell-1}(a^4x_iy_i^2)(b^4x_i^2y_i)
\prod_{i=\ell+1}^{t-1}(a^4x_iy_i^2)(b^4x_i^2y_i), \, \, \, 1\leq\ell\leq t-1,\\
v_{2\ell+3}&=&(b^{4}x_{\ell}^{2}y_{\ell})\prod_{i=1}^{t-1}(a^4x_iy_i^2)(b^4x_i^2y_i),
\, \, \, t\leq\ell\leq n,\\
v_{2\ell+4}&=&(a^{4}x_{\ell}y_{\ell}^{2})\prod_{i=1}^{t-1}(a^4x_iy_i^2)(b^4x_i^2y_i),
\, \, \, t\leq\ell\leq n.\\
%
\end{eqnarray*}
Clearly $v_i\in I^k$ for $1 \leq i \leq 2n+4$.  One easily see that
\[
v_1\,|\,a\cdot u, \;\;\; v_2\,|\,b\cdot u, \;\;\; v_3\,|\,c\cdot u, \;\;\; v_4\,|\,d\cdot u.
\]
Since
\begin{eqnarray*}
a^{4}b^{4}(a^{4}x_{\ell}y_{\ell}^{2})x_{\ell}
= a^{2}(a^{6}(b^{4}x_{\ell}^{2}y_{\ell}))y_{\ell}, \,\,\,\,\,
a^{4}b^{4}(b^{4}x_{\ell}^{2}y_{\ell})y_{\ell}
= b^{2}(b^{6}(a^{4}x_{\ell}y_{\ell}^{2}))x_{\ell},
\end{eqnarray*}
it follows that
\[
v_{2\ell+3}\,|\,x_{\ell}\cdot u, \;\;\; v_{2\ell+4}\,|\,y_{\ell}\cdot u, \;\;\; 1\leq\ell\leq t-1.
\]
Moreover,
\[
v_{2\ell+3}\,|\,x_{\ell}\cdot u, \;\;\; v_{2\ell+4}\,|\,y_{\ell}\cdot u, \;\;\; t\leq\ell\leq n.
\]
Hence $u\cdot\mm \subseteq I^k$.  In other words, $u \in I^k\: \mm$.

Now, we wish to prove that $u\not\in I^k$.  Since neither $c$ nor $d$ divides $u$,
it is enough to show that $u\not\in {\bar{I}}^k$, where
\[
\bar{I}=(a^6,a^5b,ab^5,b^6,a^4x_1y_1^2,b^4x_1^2y_1,\ldots,a^4x_ny_n^2,b^4x_n^2y_n).
\]
Suppose that there exists a monomial $w=u_1\cdots u_k\in {\bar{I}}^k$ with each $u_i\in G(\bar{I})$
such that $w$ divides $u$.
Since $\deg_{x_i}(u)=\deg_{y_i}(u)=1$ for $i = t, \ldots, n$,
each $u_{i}$ belongs to
\def\M{{\mathcal M}}
\def\N{{\mathcal N}}
\[
\M = \{a^6,a^5b,ab^5,b^6,a^4x_1y_1^2,b^4x_1^2y_1,\ldots,a^4x_{t-1}y_{t-1}^2,b^4x_{t-1}^2y_{t-1}\}.
\]
Since  $\deg_{x_i}(u)=\deg_{y_i}(u)=3$ for $i=1,\ldots, t-1$, it follows that, for $u_i$ and $u_j$
belonging to
\[
\N = \{a^4x_1y_1^2,b^4x_1^2y_1,\ldots,a^4x_{t-1}y_{t-1}^2,b^4x_{t-1}^2y_{t-1}\}
\]
with $i\neq j$, one has $u_i\neq u_j$.
Since $|\N|=2t-2=k-1$, there exists $1 \leq j \leq k$ with $u_j\not\in \N$.
Let $\rho$ denote the number of integers $1 \leq j \leq k$ with $u_j\not\in \N$.
Since $w$ divides $u$, one has
$\deg_a(w)\leq 4t$ and $\deg_b(w)\leq 4t$.

\medskip
(a)  Let $\rho = 1$.
Since $|\N|=k-1$, each monomial belonging to $\N$ divides $w$.
Thus $\deg_a(w)=4(t-1)+c$  and $\deg_b(w)=4(t-1)+d$, where $(c,d)$ belongs to
$\{(0,6),(1,5),(5,1),(6,0)\}$.
Hence one has either $\deg_a(w)>4t$ or $\deg_b(w)>4t$, a contradiction.

\smallskip
(b)  Let $\rho = 2$.  Then we may assume that
$\deg_a(w)=4(t-1)+c_1+c_2$ and $\deg_b(w)=4(t-2)+d_1+d_2$,
where each $(c_i,d_i)$ belongs to $\{(0,6),(1,5),(5,1),(6,0)\}$.
Again, one has either $\deg_a(w)>4t$ or $\deg_b(w)>4t$, a contradiction.

\smallskip
(c) Let $\rho = h$ with $h>2$.
Suppose that $a^4$ divides each of the monomials $u_{1}, \ldots, u_{s}$, where $s \leq k - h$.
Let $\deg_a(w)=4s+c_1+\ldots+c_h$ and $\deg_b(w)=4(k-h-s)+d_1+\ldots+d_h$,
where $c_i+d_i=6$ for each $1 \leq i \leq h$.  Since
\[
\deg_b(w)=4(k-h-s)+(6 - c_{1})+\cdots+(6 - c_h) \leq 4t = 2(k+1),
\]
it follows that
\[
\deg_a(w)=4s+c_1+\ldots+c_h \geq 4(k-h) + 6h -2(k+1) = 2k + 2h -2.
\]
However, since $h > 2$, one has
\[
2k + 2h -2> 2k + 4 - 2 = 2k + 2 = 2(k+1) = 4t.
\]
Thus $\deg_a(w) > 4t$, a contradiction.

\medskip

The above discussions (a), (b) and (c) complete the proof of $u\not\in I^k$.
Hence $u$ belongs to $(I^k\: \mm) \setminus I^{k}$ and $\depth(S/I^k)=0$, as desired.

\medskip
Now we are going to prove  that $\depth(S/I^k) \geq 1$  for any even number $k>0$. For the  proof  we introduce the ideals  $J=(a^6,a^5b,ab^5,b^6,a^4b^4c,a^4b^4d)$ and
$L=(a^4x_1y_1^2,b^4x_1^2y_1,\ldots,a^4x_ny_n^2,b^4x_n^2y_n)$. Then $I=J+L$, and hence
\begin{eqnarray*}
\label{sum}
I^k=J^k+J^{k-1}L+\cdots +J^2L^{k-2}+JL^{k-1}+L^k.
\end{eqnarray*}

We first show that for $k\geq 2$,  the factor module  $(I^k\:(c,d))/I^k$ is generated  by the residue classes of the elements of set
\begin{eqnarray}
\label{cd}
\mathcal{S}_k= \{a^4b^4v_1\cdots v_{k-1}\:\; v_i\in G(L) \text{ and } v_i\neq v_j \text{ for } i\neq j\}.
\end{eqnarray}

Observe that the minimal set of generators of $J^2$ only consists of monomials in $a$ and $b$. Therefore, the only monomials in $I^k$ which are divisible by $c$ or  $d$ are the generators of $JL^{k-1}$.  It follows that the generators of $I^k\:(c,d)$ which do not belong to $I^k$ are the  monomials  of the form $a^4b^4v_1\cdots v_{k-1}$ with $v_i\in G(L)$.

Suppose that $v_i=v_j$ for some $i\neq j$, say, $v_i=v_j=a^4x_\ell y_\ell^2$. We may assume that $i=1$ and $j=2$. Then
\[
u=a^4b^4v_1\cdots v_{k-1}=a^{12}b^4x_\ell^2y_\ell^4v_3\cdots v_{k-1}=(a^{12})(b^4x_\ell^2y_\ell)v_3\cdots v_{k-1}y_\ell^3.
\]
Since $a^{12}\in J^2$ and since $b^4x_\ell^2y_\ell\in L$, we see that $u\in J^2L^{k-2}\subset I^k$. This proves (\ref{cd}).

\medskip
For  a monomial  $u=a^4b^4v_1\cdots v_{k-1}\in\mathcal{S}_k$, we set
\[
Z_u=\{x_\ell\:\; \deg_{x_\ell}(v_i)=1 \text{ for some $i$}\}\union \{y_\ell\:\; \deg_{y_\ell}(v_i)=1 \text{ for some $i$}\},
\]
and
\[
W_u=\Union_{x_\ell\notin \supp(u)}\{x_\ell^2y_\ell, x_\ell y_\ell^2\}.
\]
Note that  $u+I^k$ is annihilated by $a,b,c,d$ and all variables in $Z_u$ and all monomials in $W_u$. Indeed, it is obvious that
$a,b,c,d$ and all monomials in $W_u$, annihilate $u+I^k$. Now let $x_\ell\in Z_u$, we show that $ux_\ell\in I^k$. We can assume that $v_1=a^4x_\ell y_\ell^2$. Hence $a^6(b^4x_\ell^2y_\ell)v_2\cdots v_{k-1}\in I^k$ and  $a^6(b^4x_\ell^2y_\ell)v_2\cdots v_{k-1}|ux_\ell$, so $ux_\ell\in I^k$. Similarly for  $y_s\in Z_u$, we show that $uy_s\in I^k$.

It follows from this observation  that  $(I^k\:(c,d))/I^k$ is generated as $K$-module by the residue classes of  monomials $uvw$ where $u\in \mathcal{S}_k$,  $v$ is a monomial in the variables  $x_i$ and $y_j$ belonging to $V_u=\supp(u)\setminus Z_u$ and $w$ is a monomial in the variables not belonging to the  support of $u$ and  not divisible by a monomial in $W_u$.

\medskip
Fix $u=a^4b^4v_1\cdots v_{k-1}\in\mathcal{S}_k$ and let $m=uvw$ be a generator of $(I^k\:(c,d))/I^k$ as described in the preceding paragraph.  Then  $v$ is a monomial with $\deg_{x_i}(u)=\deg_{y_j}(u)=2$ for each $x_i,y_j\in\supp(v)$. After relabeling of the variables we may assume that
\[
\supp(u)=\{a,b, x_1,y_1,\ldots,x_t,y_t\}.
\]
Then
\begin{eqnarray}
\label{weakpoint}
uv=a^4b^4\prod_{i=1}^r(a^4x_iy_i^2)(b^4x_i^2y_i)\prod_{j=r+1}^sa^4x_jy_j^{h_{j}}\prod_{\ell=s+1}^tb^4x_\ell^{g_\ell}y_\ell
\end{eqnarray}
with $h_j\geq 2$ and $g_\ell\geq 2$, and $k-1=r+t$.

\medskip
\noindent
Claim $(*)$:  None of the monomials $m=uvw$  belongs to $I^k$.

\medskip
For the proof of claim $(*)$ we first observe

\medskip
\noindent
$(\sharp)$ If  $w_1\cdots w_s$ divides $m$ with  $w_1,\ldots,  w_s\in G(L)$, then $s\leq k-1$ and after renumbering of the  $v_i$ we have $w_i=v_i$ for $i=1,\ldots,s$.

\medskip
Indeed we may assume that  $w_1=a^4x_jy_j^2 $.   It follows from (\ref{weakpoint}) that
$x_jy_j^2$  appears in one of $v_i$. Hence after renumbering we may assume that  $w_1=v_1$. Then $w_2\cdots w_s$ divides $m/v_1$. Induction on $k$ completes  the proof of $(\sharp)$.

\medskip
Now in order to prove $(*)$ we assume in the contrary  that $m\in I^k$. Then  there exist $w_i\in G(I)$  such that $w_1\cdots w_k$ divides $m$. We may assume that $w_1,\ldots,w_s\in G(L)$ and $w_{s+1},\ldots,w_k\in G(J)$. By $(\sharp)$ we may assume that  $w_i=v_i$ for $i=1,\ldots,s$. Our next claim is the following:

\medskip
\noindent
$(\sharp\sharp)$ $s=k-1$.

\medskip
For the proof of $(\sharp\sharp)$  we consider the following two cases:

\medskip
(i) Assume $s=k-2$.  Therefore,  $w_{k-1}w_k$ divides $a^4b^4v_{k-1}vw$. However, since
$w_{k-1}w_k\in G(J)$, we have  $\deg_a(w_{k-1}w_k)>\deg_a(a^4b^4v_{k-1}vw)$ or $\deg_b(w_{k-1}w_k)>\deg_b(a^4b^4v_{k-1}vw)$, a contradiction.

\medskip
(ii) Assume $s=k-h$ with $h>2$.
Hence $w_{k-h+1}\cdots w_k$ divides $a^4b^4v_{k-h+1}\cdots v_{k-1}vw$. Since
$w_{k-h+1},\ldots,w_k\in G(J)$, it follows that
\[
\deg_a(w_{k-h+1}\cdots w_k)+\deg_b(w_{k-h+1}\cdots w_k)\geq 6h.
\]
On the other hand, $$\deg_a(a^4b^4v_{k-h+1}\cdots v_{k-1}vw)+\deg_b(a^4b^4v_{k-h+1}\cdots v_{k-1}vw)=4h+4.$$ Now since $h>2$, it follows that $4h+4<6h$. This means that
\begin{eqnarray*}
\deg_a(a^4b^4v_{k-h+1}\cdots v_{k-1}v)&+&\deg_b(a^4b^4v_{k-h+1}\cdots v_{k-1}v)\\
&<&\deg_a(w_{k-h+1}\cdots w_k)+\deg_b(w_{k-h+1}\cdots w_k),
\end{eqnarray*}
a contradiction. This concludes the proof  $(\sharp\sharp)$.

\medskip
Now as we know that   $s=k-1$, it follows that   $w_k$ divides $a^4b^4w$. This is a contradiction, since $w_k\in G(J)$. Thus the proof of  $(*)$ is completed.

\bigskip
From claim $(*)$ it  follow that $\depth(S/I^k)>0$ for even $k$. Indeed suppose that $\depth(S/I^k)=0$.  Then $I^k: \mm\neq I^k$. Since $I^k:\mm\subset I^k:(c,d)$, it follows that there exists a monomial $m=uvw\in I^k:\mm$ of the form as described before. Now since $k$ is even and $k\leq 2n$ and $v_i\neq v_j$ for $i\neq j$, the set $V_u\neq \emptyset$.  It follows that $mv'\not \in I^k$ for any $v'\in V_u$, a contradiction.

\medskip
In the next step we show that $\depth(S/I^k)\leq 1$ (and hence $\depth(S/I^k)= 1$) for even $k$ with $k\leq 2n$. Indeed, we claim that $P=(a,b,c,d,x_1,y_1,\ldots,x_{n-1},y_{n-1},x_n)$ belongs to $\Ass(I^k)$   for even $k$ with $k\leq 2n$. Then, since $$\depth(S/I^k)\leq \min\{\dim(S/Q)\:\; Q\in \Ass(I^k)\}$$ (see \cite[Proposition 1.2.13]{BH}), the required inequality follows.

To show this we note that $P\in\Ass(I^k)$ if and only if $\depth(S(P)/I(P)^k)=0$, see for example \cite[Lemma 2.3]{HRV}. Here $S(P)$ is the polynomial ring in the variables which generate $P$ and  $I(P)$ is obtained from $I$ by the substitution $y_n\mapsto 1$.

In our case  $I(P)$ is generated by
\[
a^6,a^5b,ab^5,b^6,a^4b^4c,a^4b^4d,a^4x_1y_1^2,b^4x_1^2y_1,\ldots,a^4x_{n-1}y_{n-1}^2,b^4x_{n-1}^2y_{n-1}, a^4x_{n},b^4x_{n}^2.
\]
We claim that for $k=2t$ with $t\leq n$ the monomial
\[
u'=a^{8}b^{4}(a^4x_1y_1^2)(b^4x_1^2y_1)\cdots(a^4x_{t-1}y_{t-1}^2)(b^4x_{t-1}^2y_{t-1})x_ty_t\cdots x_{n-1}y_{n-1}x_n
\]
satisfies $u'\in (I(P)^k\:\mm(P))\setminus I(P)^k$.  This shows that $\depth(S(P)/I(P)^k)=0$. Let
\[
v'_i=(a^4x_n)v_i \text{ for } i=1,\ldots,2n+2 \text{ and } v'_{2n+3}=a^6(b^4x_n^2)\prod_{i=1}^{t-1}(a^4x_iy_i^2)(b^4x_i^2y_i)
\]
where $v_i$  is defined as in the first part of the proof.  Clearly $v'_i\in I(P)^k$ for $1\leq i\leq 2n+3$. one easily see that
\[
v'_1\,|\,a\cdot u', \;\;\; v'_2\,|\,b\cdot u', \;\;\; v'_3\,|\,c\cdot u', \;\;\; v'_4\,|\,d\cdot u'.
\]
Moreover

\[
v'_{2\ell+3}\,|\,x_{\ell}\cdot u', \;\;\; v'_{2\ell+4}\,|\,y_{\ell}\cdot u, \;\;\; 1\leq\ell\leq n-1\text{ and } v'_{2n+3}\,|\,x_n\cdot u'.
\]
Hence $u'\in (I(P)^k\:\mm(P))$.

 With the same argument as given in the first part of the proof, one can easily see that $u'\not\in I(P)^k$. Therefore $u'\in (I(P)^k\:\mm(P))\setminus I(P)^k$, so $\depth(S(P)/I(P)^k)=0$, as desired.

\medskip
Finally we show that $\depth(S/I^k)=2$ for $k> 2n+1$. Since the only generators of $I^k$ which are divisible by $c$ are among the generators of $JL^{k-1}$ we see that $I^k\:(c)/I^k$ is generated by the set of monomials $\Union_{u\in\mathcal{S}_k}\{u,ud\}$. Since $k>2n+1$, it follows that $\mathcal{S}_k=\emptyset$. Hence $I^k\:(c)=I^k$ for $k>2n+1$. Similarly, $I^k\:(d)=I^k$ for $k>2n+1$. It follows that $c,d$ is a regular sequence on $S/I^k$ for $k>2n+1$. This implies that $\depth(S/I^k)\geq 2$ for all $k>2n+1$.

Let $\bar{S}=K[a,b,x_1,y_1,\ldots,x_n,y_n]$ and $$\bar{I}=(a^6,a^5b,ab^5,b^6,a^4x_1y_1^2,b^4x_1^2 y_1,\ldots ,a^4x_ny_n^2,b^4x_n^2 y_n)\subset \bar{S}.$$ Then $(S/I^k)/(c,d)(S/I^k)=\bar{S}/\bar{I}^k.$

We claim  that $w=a^5b^{6k-6}x_1y_1x_2y_2\cdots x_ny_n\in \bar{I}^k:\nn\setminus \bar{I}^k$ for $k\geq 2$, where $\nn$ is the graded maximal ideal of $\bar{S}$. The claim implies that  $\depth(S/I^k/(c,d)S/I^k)=0$ for all $k\geq 2$. In particular it follows that  $\depth(S/I^k)= 2$ for all $k>2n+1$, as desired.

To prove the claim we notice that $aw$ is divisible by $(a^6)(b^6)^{k-1}\in \bar{I}^k$, and $bw$ is divisible by $(a^5b)(b^6)^{k-1}\in \bar{I}^k$. Hence $aw,bw\in  \bar{I}^k$.

Next observe that $x_iw$ is divisible by $(a^5b)(b^6)^{k-2}(b^4x_i^2y_i)\in \bar{I}^k$ and $y_iw$ is divisible by $(b^6)^{k-1}(a^4x_iy_i^2)\in \bar{I}^k$. This implies that $x_iw,y_iw\in \bar{I}^k$ for all $i$. Thus we have shown that $w\in \bar{I}^k:\nn$.

It remains to be shown that $w\not \in \bar{I}^k$. Indeed, none of the generators of $L$ divides $w$, because each of these generators has $x_i$-degree  or $y_i$-degree $2$. Therefore, if $w\in\bar{I}^k$, it follows that $w$ is divisible by a monomial in $a$ and $b$  of degree $6k$. However, $a^5b^{6k-6}$ has only degree $6k-1$, a contradiction.

\end{proof}

\end{document}